%
%
%
%
\documentclass{amsart}

\usepackage{amssymb}
\usepackage{color}
\usepackage{amsxtra} 
\usepackage{mathrsfs} 
\usepackage{yfonts}

\newtheorem{theorem}{Theorem}
\newtheorem{lem}[theorem]{Lemma}
\newtheorem{prop}[theorem]{Proposition}
 
\newtheorem*{theorem*}{Main Theorem} 
\newtheorem*{theorem**}{Theorem}

\theoremstyle{definition}
\newtheorem{definition}[theorem]{Definition}

\theoremstyle{remark}
\newtheorem{remark}[theorem]{Remark} 
\newtheorem*{remark*}{Remark}

\numberwithin{equation}{section}
\numberwithin{theorem}{section} 



\begin{document}

\title[A geometric instability of the Euler flows  
]
{A geometric instability of the  laminar axisymmetric Euler flows
with  oscillating 
flux
}


\author{Tsuyoshi Yoneda}
\address{Graduate School of Mathematical Sciences, University of Tokyo, Komaba 3-8-1 Meguro, Tokyo 153-8914, Japan} 
\email{yoneda@ms.u-tokyo.ac.jp}

\subjclass[2000]{Primary 35Q35; Secondary 35B30}

\date{\today} 


\keywords{Euler equations, Frenet-Serret formulas, orthonormal moving frame} 

\begin{abstract} 
The dynamics along the particle trajectories for the 3D axisymmetric Euler equations in an infinite cylinder are considered. It is shown that if the inflow-outflow is highly oscillating in time,
the corresponding Euler flow cannot keep the uniformly smooth laminar profile provided that 
the swirling component  is not zero. In the proof, Frenet-Serret formulas and orthonormal moving frame are essentially used.
\end{abstract} 

\maketitle

\section{Introduction} 
\label{sec:Intro} 
We study the dynamics along  the particle trajectories for the 3D axisymmetric Euler equations.
Such  Lagrangian dynamics have already been studied in mathematics (see \cite{C0, C1,C2}). For example, in \cite{C1}, Chae considered a blow-up problem for the axisymmetric 3D incompressible Euler equations with swirl. 
More precisely, he showed that under some assumption of local minima for the pressure on the axis of symmetry
with respect to the radial variations along some particle trajectory, the solution blows up in finite time.
Although the blowup problem of 3D Euler equation is still an outstanding open problem, in this paper, we focus on a different problem in physics, especially,  the cardiovascular system \cite{FQV}. 
If the blood flow is in large and medium sized vessels, the flow is governed by the usual incompressible Navier-Stokes equations. 
In this paper we focus on behavior of the interior flow, thus it is reasonable to use 
a simpler model: the 3D axisymmetric Euler flow in an infinite cylinder
$\Omega:=\{x\in\mathbb{R}^3: \sqrt{x_1^2+x_2^2}<1,\ x\in\mathbb{R}
\}$.
The configuration of the boundary is not important anymore, thus the setting $\Omega$ is just for simplicity.
The incompressible Euler equations are expresses as follows:
\begin{eqnarray}
\label{Euler eq.}
& &\partial_tu+(u\cdot \nabla)u=-\nabla \pi,\quad\nabla \cdot u=0\quad \text{in}\quad \Omega,\\
\nonumber
& & \quad u|_{t=0}=u_0,\quad u\cdot n=0 \quad \text{on}\quad \partial\Omega,\quad
u(x,t)\to (0,0,g(t))\quad (x_3\to\pm\infty)
\end{eqnarray}
with $u=u(x,t)=(u_1(x_1,x_2,x_3,t),u_2(x_1,x_2,x_3,t),u_3(x_1,x_2,x_3,t))$, 
 $\pi=\pi(x,t)$ and an uniform inflow-outflow condition  $g=g(t)$ (the uniform setting is just for simplicity,
we can easily generalize it), where $n$ is a unit normal vector on the boundary.
We show a geometric instability of the laminar profile (more precisely, ``non-uniformly smooth laminar profile" which will be defined rigorously later) when the uniform inflow-outflow is 
highly oscillating in time, more precisely,
\begin{equation}\label{inflow-outflow condition}
g(t)=2+(1-t)^{\beta_1}\sin \left((1-t)^{-\beta_2}\right)\quad (\beta_1, \beta_2>0).
\end{equation}
Note that $u=(0,0,g)$ in $\Omega\times [0,1)$ is one of the solution to \eqref{Euler eq.}.
Throughout this paper we assume existence of a unique smooth solution to \eqref{Euler eq.} in $t\in [0,1)$.
If there is no unique smooth solution in $t\in [0,1)$ for some initial data, then we can regard it as one of the instability.
The above inflow-outflow has the following property:
for any $\epsilon>0$, there is
a time interval $I$ such that    
\begin{equation}\label{flux condition}
1<\frac{\epsilon g'(t)^2}{g(t)}<\epsilon^2 g''(t)\quad\text{for}\quad
t\in I\subset [1-\epsilon,1).
\end{equation}
Throughout this paper we always focus on the flow behavior in $I$ to extract a dominant term.
In this case, notations ``$\approx$" and ``$\lesssim$" are convenient. The notation ``$a \approx b$" means there is a positive constant $C>0$ such that 
\begin{equation*}
C^{-1}a\leq b\leq Cb,
\end{equation*}
and ``$a\lesssim 1$" means that there is a positive constant $C>0$ such that 
\begin{equation*}
0\leq a\leq C.
\end{equation*}
\begin{remark}
If $g(t)=(1-t)^{-\beta}$ ($\beta>0$), then it does not satisfy \eqref{flux condition}.
Thus the oscillating flux setting might be essential.
\end{remark}

This flow setting arises from a reduced cardiovascular 1D model \cite[Section 10]{FQV}.
To obtain the reduced model, we need to assume the flow is always unilateral laminar flow,
especially,
for $D:=\{(x_1,x_2)\in\mathbb{R}^2: \sqrt{x_1^2+x_2^2}<1\}$, the axis direction of the flow $u_3$ is assumed to satisfy
\begin{equation}\label{unilateral flow condition}
\int_{D}u_3(x_1,x_2,x_3,t)^2dx_1dx_2=\alpha \left(\int_Du_3(x_1,x_2,x_3,t)dx_1dx_2\right)^2
\end{equation}
for some positive constant $\alpha>0$ (see \cite[(10.18)]{FQV}).
However, in this setting, it is not clear whether or not such  condition \eqref{unilateral flow condition} is always valid. For example, if the flow is not unilateral, containing the  reverse flow, 
then $\alpha$ may become infinity.
In this paper 
we 
 show all axisymmetric Euler flows with swirl component have  non-uniformly smooth laminar profile (possibly, turbulent transition) when these  corresponding inflow-outflow are highly oscillating in time.
These non-uniformly smooth laminar profiles suggest us that we may need to construct more suitable cardiovascular 1D model with which such non-uniformly smooth laminar profiles are more involved.

Since we consider the axisymmetric Euler flow, we can simplify the Euler equations \eqref{Euler eq.}.
Let $e_r:= x_h/|x_h|$,
$e_\theta:=x_h^\perp/|x_h|$ and
  $e_z=(0,0,1)$ with
 $x_h=(x_1,x_2,0)$, $x_h^\perp=(-x_2,x_1,0)$.
The vector valued function $u$ can be rewritten as   $u=v_re_r+v_\theta e_\theta+v_ze_z$,  
where $v_r=v_r(r,z,t)$, $v_\theta=v_\theta(r,z,t)$ and $v_z=v_z(r,z,t)$
with $r=|x_h|$, $z=x_3$.
Then the   axisymmetric Euler equations can be expressed  
as follows: 
\begin{eqnarray}
\partial_t v_r+v_r\partial_rv_r+v_z\partial_zv_r-\frac{v_\theta^2}{r}+\partial_r p&=&
0,\\
\label{axisymmetricEuler-1}
\partial_tv_\theta+v_r\partial_rv_\theta+v_z\partial_zv_\theta+\frac{v_rv_\theta}{r}&=&
0,\\
\partial_tv_z+v_r\partial_rv_z+v_z\partial_zv_z+\partial_zp&=&0
,\\
\label{axisymmetricEuler-2}
\frac{\partial_r(rv_r)}{r}+\partial_zv_z&=&0.
\end{eqnarray}

In order to state ``uniformly smooth laminar profiles" rigorously,  
first we need to give
several definitions.
\begin{definition}
We call ``unilateral flow" iff $v_z=u\cdot e_z>0$ in $\Omega$.
\end{definition}
\begin{definition}\label{Stream-shell near the boundary} 
(Axis-length  streamline in $z$.)\ 
For a unilateral flow,
 we can define an axis-length streamline $\gamma(z)$.
Let $t$
 be fixed, and let $\gamma(z)$ be such that 
\begin{equation*}
\gamma(\bar r_0,z,t)=\gamma (z):=(\bar R(z)\cos \bar \Theta(z), \bar  R(z)\sin \bar \Theta(z), z)
\end{equation*} 
with
$\bar R(z)=\bar R(\bar r_0,z,t)$, $\bar R(\bar r_0,0,t)=\bar r_0$, $\bar \Theta(z)=\bar \Theta(z,t)$
 and we choose $\bar R$ and $\bar \Theta$ in order to satisfy 
\begin{equation*}
\partial_z\gamma(z)=\left(\frac{u}{u\cdot e_z}\right)(\gamma(z),t).
\end{equation*}
\end{definition}
We easily see 
\begin{equation*}
\partial_z\gamma\cdot e_z=1,\quad \partial_z\gamma\cdot e_r=\partial_z\bar R=\frac{u_r}{u_z}\quad\text{and}\quad
\partial_z \gamma\cdot e_\theta=\bar R\partial_z\Theta=\frac{u_\theta}{u_z}.
\end{equation*}
Since $\partial_{\bar r_0}\bar R>0$ (otherwise uniqueness does not hold), 
we have 
its inverse $r_0=\bar R^{-1}(r,z,t)$.
In order to define  ``uniformly smooth laminar profiles", we use 
the regularity of $\bar R$ and $\bar R^{-1}$ up to three derivatives.
Note that 
the definition of  ``laminar profile" should come from  geometry, thus it seems $\bar R$ and $\bar R^{-1}$ are the suitable concepts rather than the velocity.

\begin{definition} (uniformly smooth laminar profile.)\ 
Let $\partial=\partial_{z}$ or $\partial_{\bar r_0}$, and let $\bar \partial=\partial_{z}$ or $\partial_r$. We call `` uniformly smooth laminar profile" if and only if 
 $\bar R$ and $\bar R^{-1}$ satisfy the following
\begin{equation*}
 \partial_{\bar r_0}\bar R\approx 1,\quad
|\partial^\ell\bar R|,\ 
 |\bar \partial^\ell\bar R^{-1}|,\ 
|\partial_t\bar R^{-1}|,\ 
|\partial_t\partial_{\bar r_0}\bar R|
\lesssim 1
\end{equation*}
for $t\in[0,1)$,
 $\ell=1,2,3$.
Later we deal with the curvature and torsion of the particle trajectory, thus it is natural to see up to three derivatives.
\end{definition}

\begin{remark}
As we remarked that $u=(0,0,g)$ in $\Omega\times[0,1)$ is one of the solution to \eqref{Euler eq.}.
This flow is the typical laminar flow.
In this case 
\begin{equation*}
 \partial_{\bar r_0}\bar R=\partial_{r}\bar R^{-1}= 1,\quad
\partial^\ell\partial_z\bar R= 
 \bar \partial^\ell\partial_z\bar R^{-1}
= 
\partial_t\bar R^{-1}=
\partial_t\partial_{\bar r_0}\bar R=0
\end{equation*}
for $t\in[0,1)$,
 $\ell=0,1,2$.
\end{remark}



Now we define the particle trajectory.
The associated Lagrangian flow $\eta(t)$
is a solution of the initial value problem 
\begin{align} \label{eq:flowC} 
&\frac{d}{dt}\eta(x,t) = u(\eta(x,t),t), 
\\  \label{eq:flow-icC} 
&\eta(x,0) = x. 
\end{align}



Now we give the main theorem.

\begin{theorem}
Let $X(0)$ be such that 
\begin{equation*}
X(0):=\{x\in\Omega: u_0(x)\cdot e_\theta\not=0 \}
\end{equation*}
and $X(t)$ be 
\begin{equation*}
X(t):=\{\eta(x,t)\in\Omega: x\in X(0)\}.
\end{equation*}
Assume there is a unique  smooth solution to the Euler equations \eqref{Euler eq.} in $\Omega\times [0,1)$.
For any $x\in \Omega(0)$, then at least, either of the following two cases must happen:

\begin{itemize}

\item
Its corresponding laminar profile at $\eta(x,t)\in \Omega(t)$ is not uniformly smooth,

\item
The particle  $\eta(x,t)\in \Omega(t)$ touches the axis in $t\in (0,1]$.

\end{itemize}
\end{theorem}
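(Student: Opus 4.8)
The plan is to argue by contradiction. Fix $x\in X(0)$, and suppose that the trajectory $\eta(x,t)$ never reaches the axis for $t\in(0,1]$ and that its profile is uniformly smooth; I will then exhibit a geometric blow-up along the trajectory as $t\to1^-$. First I would record the algebraic consequences of the hypotheses. The swirl equation \eqref{axisymmetricEuler-1} gives $\frac{D}{Dt}(rv_\theta)=0$, so $\Gamma:=rv_\theta$ is constant along the trajectory; since $x\in X(0)$ we have $\Gamma\neq0$, and since $r\le1$ in $\Omega$ while $r$ stays bounded below (the axis is never reached), the swirl obeys $|v_\theta|=|\Gamma|/r\ge|\Gamma|>0$. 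Uniform smoothness of $\bar R,\bar R^{-1}$, together with conservation of flux along stream-tubes via \eqref{axisymmetricEuler-2}, forces $v_z\approx g\approx 2$ and keeps $v_r,v_\theta$ and the relevant spatial derivatives $\lesssim 1$; hence $|u|\approx 1$, and the unit tangent $T=u/|u|$ has an azimuthal component $v_\theta/|u|$ bounded below. In other words the moving frame of the trajectory is \emph{uniformly tilted away} from $e_z$, and this tilt is where the swirl hypothesis enters.

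Next I would isolate the dominant forcing. Writing the material acceleration $a:=\frac{D}{Dt}u=-\nabla p$, axisymmetry and \eqref{axisymmetricEuler-1} show that $a$ has no $e_\theta$-component, i.e.\ $a=-\partial_r p\,e_r-\partial_z p\,e_z$ lies in the meridional plane. The decisive point is that the oscillating flux drives a large \emph{axial} gradient in the interior: integrating the axial momentum equation over the cross-section $D$ and using $\int_D v_z\,dx_1dx_2=\pi g$ yields $\int_D\partial_z p\approx -\pi g'$, while under uniform smoothness the convective terms are $\lesssim 1$; thus at the particle $\dot v_z=-\partial_z p\approx g'$, and differentiating once more along the trajectory the axial jerk is $\approx g''$. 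Consequently $a\approx g'e_z$ and $\frac{D}{Dt}a\approx g''e_z$, each modulo terms that remain $\lesssim 1$.

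The Frenet--Serret computation then produces the blow-up. Since $a$ lies in the meridional plane but $T$ is tilted azimuthally, the large axial acceleration has an unavoidable normal component: from $a\cdot e_\theta=0$ one gets $|u|^2\kappa=|a-(a\cdot T)T|\ge |a\cdot T|\,|v_\theta|/|u|\approx g'$, so $\kappa\approx g'\sqrt{v_r^2+v_\theta^2}/|u|^3\to\infty$ on $I$ as $t\to1^-$ (here the first inequality in \eqref{flux condition}, giving $g'\to\infty$, is used). For the torsion I would use the binormal projection of the jerk, $|u|^3\kappa\tau=\frac{D}{Dt}a\cdot B$. The leading term $g''e_z$ is nearly parallel to the osculating plane, with $e_z\cdot B=O(1/g')$, so $|u|^3\kappa\tau\approx g''/g'$ times a quantity bounded below, whence $\tau\approx g''/g'^2$. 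It is precisely the second inequality in \eqref{flux condition}, $g''\gg g'^2$, that forces $\tau\to\infty$ as well. Both invariants therefore blow up on $I$.

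The main obstacle is the final step: converting these blow-ups into a genuine contradiction with the \emph{definition} of uniformly smooth laminar profile, which is phrased through $\bar R,\bar R^{-1}$ rather than through $\kappa,\tau$. The reason three derivatives of $\bar R$ are retained in the definition is exactly that the curvature and torsion of the trajectory should be controlled by $\bar R$ and its derivatives up to order three, together with $\partial_t\bar R^{-1}$ and $\partial_t\partial_{\bar r_0}\bar R$. I would therefore express $\kappa$ and $\tau$ through $\bar R$-data and show that the bounds in the definition force $\kappa,\tau\lesssim 1$, in direct conflict with $\kappa\approx g'$ and $\tau\approx g''/g'^2$. Setting up this dictionary between the Lagrangian invariants of $\eta(x,t)$ and the Eulerian streamline data $\bar R$—and, inside it, justifying the interior penetration of the flux forcing ($-\partial_z p\approx g'$ and its time derivative of size $\approx g''$)—is the technical heart of the argument. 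Once it is in place the contradiction is immediate, and the only escape is for the trajectory to reach the axis, which is exactly the second alternative of the statement.
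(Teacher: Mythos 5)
Your setup---argue by contradiction, use conservation of the circulation $rv_\theta$ to keep the swirl bounded below, use flux conservation through stream-tubes to get $v_z\approx g$ and $\dot v_z\approx g'$ along the particle, and read off $\kappa\approx g'$ from the Frenet--Serret decomposition of the meridional acceleration---matches the first half of the paper's argument, and your intermediate estimates are essentially those of Lemma \ref{estimates of v} and Proposition \ref{estimates of theta}. The problem is the last step, which you yourself flag as ``the technical heart'': you propose to close the contradiction by showing that the uniform-smoothness bounds on $\bar R,\bar R^{-1}$ force $\kappa,\tau\lesssim 1$. That is false and cannot be repaired. The definition of ``uniformly smooth laminar profile'' constrains the \emph{fixed-time streamline} map $\bar R(\bar r_0,z,t)$ (plus two specific time derivatives); it does not bound the curvature or torsion of the \emph{space-time particle trajectory}, because reparametrizing the trajectory by $z$ via $t=Z_t^{-1}(z)$ converts $z$-derivatives into time derivatives of $g$, which are unconstrained. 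Indeed the paper proves, \emph{under} the uniform-smoothness hypothesis, that $\kappa\approx g'(t)/g(t)^3$ and $\partial_s\kappa\approx g''(t)/g(t)^4$, both of which blow up on $I$; a model to keep in mind is the swirling column $v_\theta=v_\theta(r)$, $v_z=g(t)$, $v_r=0$, whose streamline data are perfectly uniform while its particle trajectories (helices whose pitch oscillates in time) have curvature of order $v_\theta g'$. So ``$\kappa,\tau\to\infty$'' is a consequence of, not a contradiction to, the hypotheses you are trying to refute.

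The missing idea is where the paper actually obtains its contradiction: it promotes the Frenet frame $(\tau,n,b)$ to a normal coordinate system $(\bar\theta,\bar r,\bar z)$ around the trajectory, uses the commutativity $\partial_{\bar r}\partial_{\bar\theta}p=\partial_{\bar\theta}\partial_{\bar r}p$ together with $\nabla p\cdot\tau=\partial_t|u|$ (Lemma \ref{Euler flow along the trajectory}) and $-\nabla p\cdot n=\kappa|u|^2$ to derive the identities $3\kappa\partial_t|u|+\partial_s\kappa|u|^2=\partial_{\bar r}\partial_t|u|$ and $T\kappa|u|^2=\partial_{\bar z}\partial_t|u|$, and then invokes axisymmetry in the form $\partial_\theta\partial_t|u|=0$ with $\partial_\theta=(e_\theta\cdot n)\partial_{\bar r}+(e_\theta\cdot b)\partial_{\bar z}$. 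Since $n\cdot e_\theta\to-1$ (this is exactly where the nonzero swirl enters), the resulting linear combination contains the single dominant term $(e_\theta\cdot n)\,\partial_s\kappa\,|u|^2\approx -g''/g^2$, which by \eqref{flux condition} overwhelms both $(e_\theta\cdot n)\kappa\partial_t|u|\approx -g'(t)^2/g(t)^3$ and the torsion term (for which the paper proves a cancellation $|\kappa T b\cdot e_\theta|\ll|\partial_s\kappa|$, rather than your claimed blow-up of $T$), so the sum cannot vanish. None of this pressure-Hessian/axisymmetry mechanism appears in your proposal, and without it, or some substitute for it, the argument does not close.
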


In the next section, we prove the main theorem.

\section{Proof of the main theorem.}
In order to prove the main theorem, we define the Lagrangian flow along $r$,$z$-direction. Let 
\begin{eqnarray}\label{2D-trajectory-1}
& &\frac{d}{dt}Z(t)=v_z(R(t),Z(t),t),\\
\nonumber
& &Z(0)=z_0
\end{eqnarray}
and
\begin{eqnarray}\label{2D-trajectory-2}
& &\frac{d}{dt}R(t)=v_r(R(t),Z(t),t),\\
\nonumber
& &R(0)=r_0
\end{eqnarray}
with $Z(t)=Z(r_0,z_0,t)$ and $R(t)=R(r_0,z_0,t)$.
Assume the axisymmetric smooth Euler flow has ``uniformly smooth laminar profile" and ``particles never touch the axis".
Rigorously, ``particles never touch the axis"  means that 
$R$ satisfies the following: 
For any $\tilde r_0>0$, there is $C>0$ such that 
\begin{equation*}
R(t)>C\quad\text{for}\quad r_0>\tilde r_0\quad\text{and}\quad t\in[0,1).
\end{equation*}
First we express $v_z$ and $v_r$ by using $\bar R$ and $\bar R^{-1}$. To do so, we define the cross section of the stream-tube (annulus). Let 
$B_{-\infty}(\bar r_0)=\{x\in\mathbb{R}^3: |x_h|<\bar r_0,\ x_3=-\infty\}$ and let
\begin{equation*}
  A(\bar r_0,z,\epsilon,t):=\bigcup_{x\in B_{-\infty}(\bar r_0+\epsilon)\setminus B_{-\infty}(\bar r_0)}
\gamma(x,z,t).
\end{equation*}
We see that  its measure is  
\begin{equation*}
|A(\bar r_0,z,\epsilon,t)|=\pi\left(\bar R(\bar r_0,\epsilon,z,t)^2-\bar R(\bar r_0,z,t)^2\right).
\end{equation*}
\begin{definition} (Inflow propagation.)\ 
Let $\rho$ be such that  

\begin{equation*}
\rho(\bar r_0,z,t):=\lim_{\epsilon\to 0}\frac{|A(\bar r_0,-\infty,\epsilon,t)|}{|A(\bar r_0,z,\epsilon,t)|}.
\end{equation*}
\end{definition}
We see that
\begin{equation*}
\rho(\bar r_0,z,t)=\frac{\partial_{\bar r_0}\bar R(\bar r_0,-\infty,t) \bar R(\bar r_0,-\infty,t)}{\partial_{\bar r_0}\bar R(\bar r_0,z,t) \bar R(\bar r_0,z,t)}=\frac{\bar r_0}{\partial_{\bar r_0}\bar R(\bar r_0,z,t) \bar R(\bar r_0,z,t)}
=\frac{2\bar r_0}{\partial_{\bar r_0}\bar R(\bar r_0,z,t)^2}.
\end{equation*}
\begin{remark}
If the laminar profile is uniformly smooth, then we have  the 
estimates of the inflow propagation $\rho$:
\begin{equation*}
\rho\approx 1
,\  
|\partial_z\rho|, |\partial_z^2\rho|,
|\partial_{\bar r_0}\rho|, |\partial_{\bar r_0}^2\rho|
\lesssim 1
\quad\text{for}\quad t\in[0,1).
\end{equation*}
\end{remark}

Since 
\begin{equation*}
2\pi \int_{\bar R(\bar r_0,z,t)}^{\bar R(\bar r_0+\epsilon,z,t)}u_z(r',z,t)r'dr'=2\pi\int_{\bar r_0}^{\bar r_0+\epsilon}u_z(r',-\infty,t)r'dr'
\end{equation*}
 by divergence free and Gauss's divergence theorem,
 we can figure out $v_z$ by using the inflow propagation $\rho$, 
\begin{eqnarray*}
v_z(r,z,t)&=&\lim_{\epsilon\to 0}\frac{2\pi}{|A(\bar r_0,z,\epsilon,t)|}\int_{\bar R(\bar r_0,z,t)}^{\bar R(\bar r_0+\epsilon,z,t)}v_z(r',z,t)r'dr'\\
&=&
\lim_{\epsilon\to 0}\frac{|A(\bar r_0,-\infty,\epsilon,t)|}{|A(\bar r_0,z,\epsilon,t)|}\frac{2\pi}{|A(\bar r_0,-\infty,\epsilon,t)|}\int_{\bar r_0}^{\bar r_0+\epsilon}v_z(r',-\infty,t)r'dr'\\
&=&
\rho(\bar r_0,z,t)u_z(\bar r_0,-\infty,t).
\end{eqnarray*}
Thus we have the following proposition.

\begin{prop}\label{formula of velocities}
We have the following formula of $v_z$ and $v_r$:
\begin{equation}\label{!}
v_z(r,z,t)=\rho(\bar R^{-1}(r,z,t),z,t)u_z(\bar R^{-1}(r,z,t),0,t)=\rho(\bar R^{-1},z,t)g(t)
\end{equation}
and 
\begin{equation}\label{!!}
v_r(r,z,t)=(\partial_z\bar R)(\bar R^{-1}(r,z,t), z,t)u_z(r,z,t).
\end{equation}
\end{prop}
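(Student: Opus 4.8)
The plan is to prove the two identities by separate mechanisms: \eqref{!!} comes purely from the defining ODE of the axis-length streamline, while \eqref{!} is a consequence of incompressibility, i.e. conservation of flux along the stream-tube. In both cases the only nontrivial bookkeeping is the change of labelling via the inverse $\bar R^{-1}$, whose existence is guaranteed by $\partial_{\bar r_0}\bar R>0$.

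For \eqref{!!} I would start from the relation recorded immediately after Definition \ref{Stream-shell near the boundary}, namely $\partial_z\bar R = v_r/v_z$, which holds along the streamline $\gamma(\bar r_0,z,t)$, that is, at the radial position $r=\bar R(\bar r_0,z,t)$. Fixing $(r,z,t)$ and setting $\bar r_0=\bar R^{-1}(r,z,t)$ makes $r=\bar R(\bar r_0,z,t)$ the radius of the streamline through this very point; substituting this value and solving for $v_r$ yields $v_r=(\partial_z\bar R)(\bar R^{-1}(r,z,t),z,t)\,v_z$, and since $v_z=u\cdot e_z=u_z$ this is exactly \eqref{!!}. This step is essentially bookkeeping with $\bar R^{-1}$.

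For \eqref{!} the core is the mass-balance identity displayed just above the proposition, which I would justify as follows. By construction the lateral boundary of the stream-tube $A(\bar r_0,\,\cdot\,,\epsilon,t)$ is a union of streamlines, so $u\cdot n=0$ there; applying Gauss's divergence theorem together with $\nabla\cdot u=0$ to the tube between a large negative height and the height $z$, and letting the lower height tend to $-\infty$, equates the axial fluxes across the two annular caps. These caps are planar, each lying in a plane $x_3=\text{const}$ because $\partial_z\gamma\cdot e_z=1$, so the relevant normal is $e_z$ and the flux integrand is $v_z$. This gives
\begin{equation*}
2\pi\int_{\bar R(\bar r_0,z,t)}^{\bar R(\bar r_0+\epsilon,z,t)} v_z(r',z,t)\,r'\,dr' = 2\pi\int_{\bar r_0}^{\bar r_0+\epsilon} v_z(r',-\infty,t)\,r'\,dr'.
\end{equation*}
Dividing by $|A(\bar r_0,z,\epsilon,t)|$ and letting $\epsilon\to0$, the left side converges to the pointwise value $v_z(r,z,t)$ at $r=\bar R(\bar r_0,z,t)$, while the right side becomes the limiting ratio of annular areas times the averaged inflow, i.e. $\rho(\bar r_0,z,t)\,u_z(\bar r_0,-\infty,t)$. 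Using the inflow-outflow condition $u\to(0,0,g(t))$ to replace $u_z$ at the reference by $g(t)$, and rewriting the label $\bar r_0=\bar R^{-1}(r,z,t)$, produces \eqref{!}.

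The step I expect to be the main obstacle is the rigorous passage to the limit: showing that the flux over a vanishingly thin annulus, divided by its area, returns the pointwise value of $v_z$. This relies on the continuity and non-degeneracy supplied by the ``uniformly smooth laminar profile'' hypothesis, in particular $\partial_{\bar r_0}\bar R\approx1$, together with a clean passage $z_0\to-\infty$ in the divergence theorem so that the lower cap is genuinely the annulus at $-\infty$ where $u_z=g(t)$ is uniform. The conceptual content is the pair of geometric facts that the tube's lateral surface carries no flux and that its caps are planar; everything else is careful accounting with $\bar R$ and $\bar R^{-1}$.
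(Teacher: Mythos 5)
Your proposal is correct and follows essentially the same route as the paper: \eqref{!} is obtained by applying the divergence theorem to the stream-tube (no flux through the lateral surface, planar annular caps), dividing by the annulus area and letting $\epsilon\to 0$ to recover $\rho\cdot g(t)$, while \eqref{!!} is just the streamline relation $\partial_z\bar R = v_r/v_z$ relabelled via $\bar r_0=\bar R^{-1}(r,z,t)$. You actually supply more justification than the paper does for the limit passage and for why the lateral boundary carries no flux.
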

By the above proposition, we see 
\begin{equation*}
|v_z|,\ |v_r|\lesssim 1.
\end{equation*}
Since $v_z>0$, then we can define the inverse of $Z$ in $t$: $t=Z_t^{-1}(z,r_0,z_0)$.
In this case we can estimate $\partial_zZ^{-1}_t=1/\partial_tZ=1/v_z\approx 1/g(t)$ and $\partial_z^2Z^{-1}_t=-\frac{\partial_zv_z}{v_z^2}$.
First we show the following estimates.
\begin{lem}\label{estimates of v}
For $t\in I$,
we have the following estimates along the axis-length trajectory:
\begin{eqnarray}\label{partial_zv_z}
& &\partial_zv_z(R(Z^{-1}_t(z)),z,Z_t^{-1}(z))\approx g'(t)/g(t),\\
\nonumber
& &\partial_z^2Z_t^{-1}\approx- g'(t)/g(t)^3,\\
\nonumber
& &\partial_z^2v_z(R(Z^{-1}_t(z)),z,Z_t^{-1}(z))\approx g''(t)/g(t)^2,\\
\nonumber
& &|\partial_zv_r(R(Z^{-1}_t(z)),z,Z_t^{-1}(z))|\lesssim g'(t)/g(t),\\
\nonumber
& &|\partial_z^2v_r(R(Z^{-1}_t(z)),z,Z_t^{-1}(z))|\lesssim g''(t)/g(t)^2\quad\text{with}\quad
t=Z^{-1}_t(z).
\end{eqnarray} 
Moreover, we have
\begin{equation}\label{v_theta estimate}
v_\theta(R(Z^{-1}_t(z)),z,Z^{-1}_t(z))\approx 1,
\end{equation} 
(it is reasonable to assume $v_\theta(r_0,z_0,0)>0$)
\begin{eqnarray}\label{partial_zv_theta}
& &|\partial_zv_\theta(R(Z^{-1}_t(z)),z,Z_t^{-1}(z))|\lesssim 1,\\
\nonumber
& &|\partial_z^2v_\theta(R(Z^{-1}_t(z)),z,Z_t^{-1}(z))|\lesssim g'(t)/g(t)\quad\text{with}\quad
t=Z^{-1}_t(z)
\end{eqnarray}
and
\begin{equation*}
\partial_t|u(\eta(x,t),t)|\approx g'(t).
\end{equation*}
\end{lem}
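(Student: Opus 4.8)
The plan is to read every estimate in the lemma as a derivative taken \emph{along the particle trajectory} $t\mapsto(R(t),Z(t))$ and to trade the axial derivative $\partial_z$ for a time derivative. Since $Z$ solves \eqref{2D-trajectory-1} with $\frac{d}{dt}Z=v_z$, its inverse satisfies $\partial_zZ_t^{-1}=1/v_z$, so for any scalar $\phi$ evaluated on the trajectory,
\begin{equation*}
\frac{d}{dz}\phi=\frac{1}{v_z}\frac{D\phi}{Dt},\qquad \frac{D}{Dt}:=\partial_t+v_r\partial_r+v_z\partial_z .
\end{equation*}
The whole lemma then rests on two pillars: that $v_z\approx g$, and that the material derivative of $v_z$ is, to leading order, governed by $g$ alone; the powers $g'/g$ and $g''/g^2$ are simply what the chain rule manufactures from the factor $1/v_z\approx 1/g$.

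First I would record the zeroth‑order profile. From \eqref{!} and $\rho\approx1$ one gets $v_z\approx g$, hence $v_z$ is bounded above and below (recall $g\approx2$ on $I$) and $\partial_zZ_t^{-1}=1/v_z\approx1/g$. For the swirl I would use that $rv_\theta$ is a material invariant of \eqref{axisymmetricEuler-1}: since $\frac{Dv_\theta}{Dt}=-v_rv_\theta/r$ and $\frac{Dr}{Dt}=v_r$, we have $\frac{D(rv_\theta)}{Dt}=v_rv_\theta+r(-v_rv_\theta/r)=0$, so along the trajectory $v_\theta=\Gamma_0/R(t)$ with $\Gamma_0=r_0\,v_\theta(r_0,z_0,0)$ fixed. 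Because $R$ stays in a compact subinterval of $(0,1)$ — bounded below by the no–axis–touching hypothesis and above by the radius of $\Omega$ — this yields \eqref{v_theta estimate}, $v_\theta\approx1$. The no–axis–touching hypothesis is indispensable here: without the lower bound on $R$ the factor $1/R$ would blow up.

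Next I would compute the first material derivative. Writing $v_z=\rho\,g$ on the trajectory, $\frac{Dv_z}{Dt}=(\frac{D\rho}{Dt})g+\rho g'$. The uniform‑smoothness bounds — in particular $|\partial_t\bar R^{-1}|\lesssim1$ and $|\partial_t\partial_{\bar r_0}\bar R|\lesssim1$, which are exactly the data entering $\rho$ — give $\frac{D\rho}{Dt}\lesssim1$, so that term contributes $\lesssim g\approx1$, while $\rho g'\approx g'$. As $g'\gg1$ on $I$ by \eqref{flux condition}, the $g'$ term dominates, $\frac{Dv_z}{Dt}\approx g'$, and dividing by $v_z\approx g$ gives $\partial_zv_z\approx g'/g$. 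Then $\partial_z^2Z_t^{-1}=-v_z^{-3}\frac{Dv_z}{Dt}\approx-g'/g^3$ is immediate; the $v_r$ estimates follow by differentiating the product \eqref{!!}, $v_r=(\partial_z\bar R)v_z$, whose geometric factor $\partial_z\bar R$ and its trajectory derivatives are $\lesssim1$, so $v_r$ inherits the bounds of $v_z$ but only as upper bounds, giving $|\partial_zv_r|\lesssim g'/g$; and the swirl derivatives come from differentiating $v_\theta=\Gamma_0/R$, producing $-\Gamma_0 v_r/(R^2v_z)\lesssim1$ and, at second order, a term in $\partial_zv_r\lesssim g'/g$, matching \eqref{partial_zv_theta}.

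The decisive estimate is $\partial_z^2v_z\approx g''/g^2$, and I expect this to be the main obstacle. Iterating the chain rule,
\begin{equation*}
\partial_z^2v_z=\frac{1}{v_z^2}\frac{D^2v_z}{Dt^2}-\frac{1}{v_z^3}\Big(\frac{Dv_z}{Dt}\Big)^2 ,
\end{equation*}
and expanding $\frac{D^2v_z}{Dt^2}=(\frac{D^2\rho}{Dt^2})g+2(\frac{D\rho}{Dt})g'+\rho g''$ leaves, after dividing by $v_z^2\approx g^2$, a leading term $\rho g''/(\rho^2g^2)\approx g''/g^2$ together with errors of sizes $(g')^2/g^3$, $g'/g^2$ and $\frac{D^2\rho}{Dt^2}/g$. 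The flux condition \eqref{flux condition} is precisely what forces each of these to be $\ll g''/g^2$: it gives $(g')^2/g\ll g''$ (killing the only term that could compete) and $g'\ll g''$. The genuine difficulty is to show $\frac{D^2\rho}{Dt^2}\ll g''$, which I would extract from the uniform‑smoothness class together with the observation that $\frac{D}{Dt}$ only ever introduces the bounded velocity $v_z\approx g$ and at worst a single factor $\frac{Dv_z}{Dt}\approx g'\ll g''$. The same expansion, tracked with signs, also secures the lower bounds in the ``$\approx$'' statements: because $\rho\ge C^{-1}>0$ there is no cancellation in the leading coefficient. Likewise $\partial_t|u|\approx g'$ follows from $\frac{d}{dt}|u|^2=2\big(v_r\frac{Dv_r}{Dt}+v_\theta\frac{Dv_\theta}{Dt}+v_z\frac{Dv_z}{Dt}\big)$, where $v_\theta\frac{Dv_\theta}{Dt}\lesssim1$ is negligible and the surviving part has size $(\rho^2+(\partial_z\bar R)^2)gg'$ with coefficient $\rho^2+(\partial_z\bar R)^2\gtrsim\rho^2\gtrsim1$ bounded below, so no cancellation occurs and, after dividing by $|u|\approx1$, we obtain $\partial_t|u|\approx g'$.
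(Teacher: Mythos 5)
Your proposal is correct and its computational core coincides with the paper's: both rest on the formulas $v_z=\rho\,g$ and $v_r=(\partial_z\bar R)v_z$ from Proposition \ref{formula of velocities}, on $\rho\approx 1$ with bounded derivatives from the uniformly smooth laminar profile, on the lower bound for $R$ from the no--axis--touching hypothesis, and on the flux condition \eqref{flux condition} to rank $1\ll g'\ll (g')^2\ll g''$. The organization differs in two places. First, you dispatch the swirl via the material invariance of $rv_\theta$, i.e.\ $v_\theta=\Gamma_0/R(t)$; the paper instead integrates \eqref{axisymmetricEuler-1} by Gronwall to get $v_\theta=v_\theta(r_0,z_0,0)\exp\{-\int_0^t v_r/R\,d\tau\}$ --- these are literally the same identity since $\exp\{-\int_0^t v_r/R\,d\tau\}=r_0/R(t)$, so nothing is gained or lost. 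Second, and more substantively, you bypass the paper's Gronwall control of the Lagrangian deformation matrix $D\eta_{2D}$ (the bounds \eqref{control streamline1}--\eqref{control streamline2} on $\partial_{z_0}Z$, $\partial_{r_0}R$, etc.) by converting $\partial_z$ along the trajectory directly into $v_z^{-1}D/Dt$ and iterating; since all quantities in the lemma live on a single trajectory with fixed $(r_0,z_0)$, derivatives with respect to the Lagrangian labels indeed never enter, so your shortcut is legitimate and arguably cleaner. The one soft spot you correctly flag --- that $D^2\rho/Dt^2\ll g''$ requires second material derivatives of $\bar R^{-1}$ and of $\partial_{\bar r_0}\bar R$ in $t$, which are not literally among the quantities listed in the definition of the uniformly smooth laminar profile --- is treated no more rigorously in the paper (it is absorbed into ``the dominant terms are always $g$ with derivatives''), so your proposal is at least as complete as the original on this point.
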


\begin{proof}
First we consider a non-incompressible 2D-flow composed by $R$ and $Z$.
Let us denote $\eta_{2D}=\eta_{2D}(t)=(R(t),Z(t))$ and $D\eta_{2D}$ be its Lagrangian deformation:
\begin{equation*}
D\eta_{2D}=
\begin{pmatrix}
\partial_{r_0}R&\partial_{z_0}R\\
\partial_{r_0}Z&\partial_{z_0}Z
\end{pmatrix}.
\end{equation*}
We see $\det (D\eta_{2D})=\partial_{r_0}R\partial_{z_0}Z-\partial_{z_0}R\partial_{r_0}Z$ and thus we have 
\begin{equation*}
D(\eta_{2D}^{-1})=(D\eta_{2D})^{-1}=\frac{1}{\det D\eta_{2D}}
\begin{pmatrix}
\partial_{z_0}Z&-\partial_{z_0}R\\
-\partial_{r_0}Z&\partial_{r_0}R
\end{pmatrix}.
\end{equation*}
A direct calculation with \eqref{axisymmetricEuler-2}, \eqref{2D-trajectory-1} and \eqref{2D-trajectory-2} yields
\begin{equation*}
\frac{d}{dt}(\det D\eta_{2D})=(\partial_rv_r+\partial_zv_z)(\det D\eta_{2D})=-\frac{v_r}{R(t)}(\det D\eta_{2D}).
\end{equation*}
Thus
\begin{equation*}
\det D\eta_{2D}(t)=\det D\eta_{2D}(0)\exp\bigg\{-\int_0^t\frac{v_r(R(\tau),Z(\tau),\tau)}{R(\tau)}d\tau\bigg\}.
\end{equation*}
Since $|v_r|\lesssim 1$ and $R(t)\geq C$,
we have  
\begin{equation*}
\det D\eta_{2D}\approx 1
\end{equation*}
by Gronwall's equality.
Recall the inflow propagation  $v_z(\bar R^{-1}(R(t),Z(t),t),-\infty,t)=g(t)$.
Then we see
\begin{equation}\label{explicit formula1}
v_z(R(t),Z(t),t)=\rho(\bar R^{-1}(R(t), Z(t),t), Z(t),t)g(t)=\partial_tZ(t)
\end{equation}
and 
\begin{equation}\label{explicit formula2}
(\partial_z\bar R)(\bar R^{-1}(R,Z,t),Z,t)\rho(\bar R^{-1}(R,Z,t),Z,t)g(t)=v_r(R,Z,t)=\partial_tR.
\end{equation}
Since we have already controlled $\det D\eta_{2D}$,
here we  estimate $\partial_{r_0}R$, $\partial_{r_0}Z$, $\partial_{z_0}R$ and $\partial_{z_0}Z$ respectively.
We see the following estimates of Lagrangian deformation:
\begin{equation*}
\partial_t\partial_{z_0}Z(t)=\bigg[\partial_{z_0}R\partial_{\bar r_0}\rho\partial_r\bar R^{-1}+
\partial_{z_0}Z\partial_{\bar r_0}\rho\partial_z\bar R^{-1}+\partial_{z_0}Z\partial_z\rho\bigg]g(t)\\
\end{equation*}
\begin{equation*}
\partial_t\partial_{z_0}R(t)=\bigg[\partial_{z_0}\partial_z\bar R\partial_{r}\bar R^{-1}\partial_{z_0}R+
\partial_{r_0}\partial_z\bar R\partial_z\bar R^{-1}\partial_{z_0}Z+\partial_z^2\bar R\partial_{z_0}Z\bigg]g(t)+(v_z\text{ part}).
\end{equation*}
Then we can construct a Gronwall's inequality of $|\partial_{z_0}Z|+|\partial_{z_0}R|$
and then we can control each $|\partial_{z_0}Z|$ and $|\partial_{z_0}R|$ (here we use ``uniformly smooth laminar profile"). 
By the same calculation, we can also control
$|\partial_{r_0}Z|$ and $|\partial_{r_0}R|$.
This means 
\begin{equation}\label{control streamline1}
|\partial_{z_0}Z|,\ |\partial_{z_0}R|,\  
|\partial_{r_0}Z|,\ |\partial_{r_0}R|
\lesssim 1.
\end{equation}
By the same argument again,  we have 
\begin{equation}\label{control streamline2}
|\partial^2 Z|,\ |\partial^2 R|\lesssim 1
\end{equation}
with $\partial=\partial_{r_0}$ or $\partial_{z_0}$.
By \eqref{control streamline1} and \eqref{control streamline2},  we can estimate $v_r$ and $v_z$ along the particle trajectory in $z$-valuable (note that the dominant terms are always $g$ with derivatives).
Thus we obtain \eqref{partial_zv_z}.
Now we control $v_\theta$ by using \eqref{partial_zv_z}.
By \eqref{axisymmetricEuler-1} we see that 
\begin{equation*}
\partial_t v_\theta(R(t),Z(t),t)=v_\theta(r_0,z_0,0)-\frac{v_r(R(t),Z(t),t)v_\theta(R(t),Z(t),t)}{R(t)}.
\end{equation*}
Applying the Gronwall equality,  we see
\begin{equation}\label{v_theta}
v_\theta(R^{-1}(Z^{-1}_t(z)),z,Z^{-1}_t(z))=v_\theta(r_0,z_0,0)\exp\bigg\{-\int_0^{Z_t^{-1}(z)}
\frac{v_r(R(\tau),Z(\tau),\tau)}{R(\tau)}d\tau\bigg\}.
\end{equation}
Since $|v_r|\lesssim 1$, we have \eqref{v_theta estimate}.
Just taking derivatives to \eqref{v_theta} in $z$-valuable, then we also have \eqref{partial_zv_theta}.
Now we estimate $\partial_t|u(\eta,t)|$.
Recall  the usual trajectory $\eta(x,t)=(R(t)\cos\Theta(t),R(t)\sin\Theta(t),Z(t))$,\\ $e_\theta=(-\sin\Theta(t),\cos\Theta(t),0)$ and 
$e_r=(\cos\Theta(t),\sin\Theta(t),0)$. Then, by a direct calculation with $u=v_re_r+v_\theta e_\theta+v_ze_z$,  we see that 
\begin{eqnarray*}
& &\frac{1}{2}\partial_t|u(\eta(x,t),t)|^2=\partial_t u\cdot u
=\partial_t v_rv_r+\partial_tv_\theta v_\theta+\partial_tv_zv_z
\end{eqnarray*}
along the trajectory.
Just take a time derivative to $v_z$ along the trajectory, then we have  
\begin{eqnarray*}
\partial_t(v_z(R(t),Z(t),t))&=&\partial_{\bar r_0}\rho\partial_t\bar R^{-1}g(t)+\partial_{\bar r_0}\rho\partial_z\bar R^{-1}\partial_t Z g
+
\partial_{\bar r_0}\rho\partial_r\bar R^{-1}\partial_tR g\\
& &
+\partial_z\rho\partial_t Z\rho
+\partial_t\rho g
+\rho g'.\\
\end{eqnarray*}
Thus
\begin{equation*}
\partial_t v_z v_z\approx\rho^2g'(t)g(t)\quad\text{for}\quad t\in I.
\end{equation*}
By the similar calculation,
\begin{equation*}
\partial_t v_r v_r\lesssim
(\partial_z\bar R)^2\rho^2g'(t)g(t)\quad\text{for}\quad t\in I.
\end{equation*}
Clearly $\partial_tv_\theta v_\theta$ is
 not large anymore.
Thus we have 
\begin{equation*}
\partial_t|u(\eta(x,t),t)|^2\approx g'(t)g(t)\quad\text{for}\quad t\in I
\end{equation*}
and then
\begin{equation*}
\partial_t|u(\eta(x,t),t)|\approx g'(t)\quad\text{for}\quad t\in I.
\end{equation*}
\end{proof}

Now we define the axis-length trajectory $\tilde\eta$ in $z$.

\begin{definition} (Axis-length trajectory.)\ 
Let $\tilde\eta$ be such that 
\begin{equation*}
\tilde \eta(z):=(r(z)\cos\theta(z),r(z)\sin\theta(z),z)
\end{equation*}
and we choose $r(z)$ and $\theta(z)$
in order to satisfy $\tilde \eta(z)=\eta(x,Z^{-1}_t(z))$.
\end{definition}
For $t\in I$, we see
\begin{eqnarray*}
\partial_z\tilde\eta\cdot e_\theta&=&\frac{\partial_t\eta\cdot e_\theta}
{v_z}=r\theta'=\frac{v_\theta(R(Z^{-1}_t(z)),z,Z^{-1}_t(z))}{v_z(R(Z^{-1}_t(z)),z,Z^{-1}_t(z))}\approx 1/g(t),\\
\partial_z\tilde\eta\cdot e_r&=&\frac{v_r}{v_z}=r',\quad |r'|\lesssim 1,\quad |r''|\lesssim g'/g
\end{eqnarray*}
with $t=Z^{-1}_t(z)$. We need the estimates $\theta''$ and $\theta'''$ to specify the geometry of the trajectory, in particular, its 
curvature and torsion.
By Lemma \ref{estimates of v}, we can immediately obtain the following proposition.
\begin{prop}\label{estimates of theta}
For $t\in I$, by Lemma \ref{estimates of v}
 we have (just see the highest order term)
\begin{equation*}
\theta''(z)\approx -\frac{v_\theta\partial_zv_z}{v_z^2}\approx  -g'(t)/g(t)^3
\end{equation*}
 and 
\begin{equation*}
\theta'''(z)\approx -\frac{v_\theta\partial_z^2v_z}{v_z^2}+\frac{2v_\theta(\partial_zv_z)^2}{v_z^3}\approx -g''(t)/g(t)^4
\end{equation*}
with $t=Z^{-1}_t(z)$.
\end{prop}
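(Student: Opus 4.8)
The plan is to differentiate the single scalar identity $r\theta'=v_\theta/v_z$ — which is exactly the relation $\partial_z\tilde\eta\cdot e_\theta=r\theta'=v_\theta/v_z$ recorded just above the proposition — twice in $z$ and to read off the dominant term at each stage from Lemma~\ref{estimates of v}. Throughout I would use that $r\approx 1$ (bounded below since particles never touch the axis, bounded above since the trajectory stays in the unit cylinder $\Omega$), together with $|r'|\lesssim 1$ and $|r''|\lesssim g'/g$, and the fact that on $I$ one has $g\approx 1$ while the derivatives of $g$ obey the separation of scales $1\ll g'\ll (g')^2\ll g''$. This last chain is precisely what \eqref{flux condition} provides, and it is the book-keeping engine of the whole argument.

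For $\theta''$, differentiating $r\theta'=v_\theta/v_z$ in $z$ gives
\[
r'\theta'+r\theta''=\frac{\partial_zv_\theta}{v_z}-\frac{v_\theta\,\partial_zv_z}{v_z^2}.
\]
By Lemma~\ref{estimates of v} the last term is $\approx -g'/g^3$, while the remaining contributions $r'\theta'$ and $\partial_zv_\theta/v_z$ are both $\lesssim 1/g$ (using $\theta'\approx 1/g$, $|\partial_zv_\theta|\lesssim 1$, $v_z\approx g$). Since $g'\gg g^2$ on $I$, equivalently $g'/g^3\gg 1/g$, the term $-v_\theta\partial_zv_z/v_z^2$ dominates; dividing by $r\approx 1$ yields $\theta''\approx -v_\theta\partial_zv_z/v_z^2\approx -g'/g^3$, as claimed.

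For $\theta'''$ I would apply $\partial_z$ once more. The leading contribution comes from $\partial_z(-v_\theta\partial_zv_z/v_z^2)$, which expands as
\[
-\frac{\partial_zv_\theta\,\partial_zv_z}{v_z^2}-\frac{v_\theta\,\partial_z^2v_z}{v_z^2}+\frac{2v_\theta(\partial_zv_z)^2}{v_z^3},
\]
whose three terms are of order $g'$, $g''/g^4\approx g''$, and $(g')^2/g^5\approx (g')^2$ respectively. Every further term produced by the product rule — namely $r'\theta''$, $r''\theta'$, and $\partial_z^2v_\theta/v_z$ — is $\lesssim g'$. The ordering $g''\gg(g')^2\gg g'$ on $I$, which is the content of \eqref{flux condition}, therefore singles out $-v_\theta\partial_z^2v_z/v_z^2\approx -g''/g^4$ as the dominant object; dividing by $r\approx 1$ gives $\theta'''\approx -v_\theta\partial_z^2v_z/v_z^2+2v_\theta(\partial_zv_z)^2/v_z^3\approx -g''/g^4$.

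The computation is a routine two-fold differentiation, so the only genuine obstacle is the term accounting: one must verify that at each order the dominant object is the one carrying the highest derivative of $g$ (that is, $\partial_zv_z$ at first order and $\partial_z^2v_z$ at second order), and that all geometric correction terms involving $r'$, $r''$, $\theta'$ and the derivatives of $v_\theta$ are \emph{strictly} lower order. This is exactly what the scale separation $1\ll g'\ll(g')^2\ll g''$ from \eqref{flux condition} guarantees on $I$; off $I$ the dominance would generally fail, which is why the statement is confined to $t\in I$.
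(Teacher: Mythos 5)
Your proof is correct and follows exactly the route the paper intends: the paper gives no explicit argument (it asserts the proposition follows ``immediately'' from Lemma \ref{estimates of v}), and the natural derivation is precisely yours --- differentiate $r\theta'=v_\theta/v_z$ twice in $z$, insert the estimates of Lemma \ref{estimates of v} together with $r\approx 1$, $|r'|\lesssim 1$, $|r''|\lesssim g'/g$, and use the scale separation $1\ll |g'|\ll (g')^2\ll g''$ coming from \eqref{flux condition} to isolate the dominant term at each order. Your term-by-term accounting reproduces the displayed formulas of the proposition, so nothing further is needed.
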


From  the trajectory $\eta(x,t)$, we define the arc-length trajectory $\eta^*(s)=\eta^*(x,s)$.

\begin{definition} (Arc-length trajectory.)\ 
Let $\eta^*$ be such that 
\begin{equation*}
\eta^*(s):=\eta(x,t(s))\quad \text{and}\quad \eta^*(x,0)=\eta(x,0)
\end{equation*}
with  $\partial_st(s)=|u|^{-1}$.
\end{definition}
In this case we see $|\partial_s\eta^*(s)|=1$.
We define the unit tangent vector $\tau$ as 
\begin{equation*}
\tau(s)=\partial_s \eta^*(x,s),
\end{equation*}
the unit curvature vector $n$ as $\kappa n=\partial_s \tau$ with a curvature function $\kappa(s)>0$,
the unit torsion vector $b$ 
as : $b(s):=\pm\tau(s)\times n(s)$ ($\times$ is an exterior product)
 with a torsion function to be positive $T(s)>0$ (once we restrict $T$ to be positive, then the direction of $b$ can be determined), that is,
\begin{equation*}
Tb:=\partial_sn+\kappa \tau,\quad |b|=1
\end{equation*}
due to the Frenet-Serret formula.

By the estimates of $\theta''$ and $\theta'''$ in  Proposition \ref{estimates of theta},
 we  obtain the following lemma.
\begin{lem}
For $t\in I$, we have 
$n\cdot e_\theta\to -1
$ ($t\to 1$),
$\partial_s\kappa\approx g''(t)/g(t)^4$ (with $t=t(s)$) and $\partial_s\kappa\gg  |\kappa Tb\cdot e_\theta| 
$.
\end{lem}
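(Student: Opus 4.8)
The plan is to exploit the fact that the arc-length trajectory $\eta^*$ and the axis-length trajectory $\tilde\eta$ trace the \emph{same} space curve, so that the Frenet data $\kappa,n,T,b$ are reparametrization invariants and may be computed from the convenient $z$-parametrization $\tilde\eta(z)$, for which Proposition \ref{estimates of theta} already supplies the derivatives of $r$ and $\theta$. Writing $\sigma:=|\partial_z\tilde\eta|=|u|/v_z\approx 1$ and differentiating $\tilde\eta=re_r+ze_z$ in the moving frame (using $\partial_z e_r=\theta'e_\theta$, $\partial_z e_\theta=-\theta'e_r$), I would record
\begin{align*}
\partial_z\tilde\eta&=r'e_r+r\theta'e_\theta+e_z,\\
\partial_z^2\tilde\eta&=(r''-r\theta'^2)e_r+(2r'\theta'+r\theta'')e_\theta,\\
\partial_z^3\tilde\eta&=(\cdots)e_r+(r\theta'''+\cdots)e_\theta,
\end{align*}
and observe, using Proposition \ref{estimates of theta} together with $|r'|\lesssim 1$ and $|r''|\lesssim g'/g$, that the angular entries $r\theta''\approx -g'/g^3$ and $r\theta'''\approx -g''/g^4$ are the intended dominant contributions. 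Since $\partial_s=\sigma^{-1}\partial_z$, every arc-length derivative is then read off from these.

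From $\kappa=|\partial_z\tilde\eta\times\partial_z^2\tilde\eta|/\sigma^3$ and the dominance of the $r\theta''$ entry I expect $\kappa\approx|g'|/g^3$; differentiating along the arc length and invoking the flux hierarchy $g''\gg g'^2$ from \eqref{flux condition} (so that in $\partial_z(g'/g^3)$ the $g''/g^3$ term beats $3g'^2/g^4$) yields $\partial_s\kappa\approx g''/g^4$, the second assertion. For the normal I would use $\kappa n=\sigma^{-2}(\partial_z^2\tilde\eta)_\perp$, the component of $\partial_z^2\tilde\eta$ orthogonal to $\tau=\partial_z\tilde\eta/\sigma$: because $\partial_z^2\tilde\eta$ is concentrated in the $e_\theta$ direction, its normalized perpendicular part points essentially along $-e_\theta$, giving $n\cdot e_\theta\to -1$. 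Here one must carefully track how projecting out $\tau$ leaks components into $e_z$ (through $\tau\cdot e_z=\sigma^{-1}$) and into $e_r$, and show that these leaked pieces are subdominant relative to the $e_\theta$ part.

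Finally, for $\partial_s\kappa\gg|\kappa T b\cdot e_\theta|$ I would compute $b=\partial_z\tilde\eta\times\partial_z^2\tilde\eta/|\partial_z\tilde\eta\times\partial_z^2\tilde\eta|$ and $T=\det(\partial_z\tilde\eta,\partial_z^2\tilde\eta,\partial_z^3\tilde\eta)/|\partial_z\tilde\eta\times\partial_z^2\tilde\eta|^2$. The crucial observation is that, to leading order, $\partial_z^2\tilde\eta\approx r\theta''\,e_\theta$, so $b$ is aligned with the direction of $\partial_z\tilde\eta\times e_\theta=-e_r+r'e_z$, which carries \emph{no} $e_\theta$-component; hence $b\cdot e_\theta$ is subdominant. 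Combining the resulting smallness of $b\cdot e_\theta$ with $\kappa\approx|g'|/g^3$, the estimate of $T$, and once more the hierarchy $g''\gg g'^2$, the torsion term is forced below $\partial_s\kappa\approx g''/g^4$. \emph{The main obstacle}, common to all three parts, is the leading-order bookkeeping in the moving frame: one must show that the radial ($e_r$) contributions coming from $r'=v_r/v_z$ and $r''$ — which are a priori only bounded, not small — neither spoil the $e_\theta$-dominance of $\partial_z^2\tilde\eta$ nor destroy the near-orthogonality of $b$ to $e_\theta$. This is precisely where the uniformly-smooth-laminar bounds of Lemma \ref{estimates of v} and the strict hierarchy $1\ll|g'|\ll g'^2\ll g''$ supplied by \eqref{flux condition} must be used in full.
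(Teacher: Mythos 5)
Your overall strategy coincides with the paper's: both arguments compute the Frenet data of the curve from its $z$-parametrization $\tilde\eta(z)=(r(z)\cos\theta(z),r(z)\sin\theta(z),z)$ and feed in $\theta''\approx -g'/g^3$, $\theta'''\approx -g''/g^4$ from Proposition \ref{estimates of theta}. For the first two claims your computation is essentially the paper's (the paper works with $\kappa^2=|\partial_s^2\eta^*|^2$ and $\partial_s(\kappa^2)=2\kappa\,\partial_s\kappa$ instead of the cross-product formula, but the dominant terms extracted, $r\theta''$ and $r\theta'''$, are identical). For the third claim you take a genuinely different route: you evaluate $b$ and $T$ directly from $\partial_z\tilde\eta\times\partial_z^2\tilde\eta$ and the determinant formula, whereas the paper uses the Frenet--Serret identity to write $\kappa Tb\cdot e_\theta=\partial_s(\kappa n)\cdot e_\theta-(\partial_s\kappa)\,n\cdot e_\theta+\kappa^2\tau\cdot e_\theta$ and observes that the two terms of size $g''/g^4$ cancel. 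The same cancellation is encoded in your determinant: the leading $e_\theta$-parts of $\partial_z^2\tilde\eta$ and $\partial_z^3\tilde\eta$ are parallel and so contribute nothing, which is why $\det(\partial_z\tilde\eta,\partial_z^2\tilde\eta,\partial_z^3\tilde\eta)\lesssim g'g''$ and hence $T\lesssim g''/g'$, rather than the larger bound a naive estimate would give. Your version is arguably cleaner since it never needs $\partial_s n$.

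Two caveats. First, your claim that $b\cdot e_\theta$ is subdominant should not rest on the alignment of $b$ with $\partial_z\tilde\eta\times e_\theta$ alone: the error in that alignment is of size $|r''-r\theta'^2|/|r\theta''|$, which the available bounds ($|r''|\lesssim g'/g$, $|r\theta''|\approx g'/g^3$, and $g\approx 2$ so the powers of $g$ are harmless) only make $O(1)$, not $o(1)$. The clean way to obtain it is from your own first assertion: $n\cdot e_\theta\to-1$ together with $b\perp n$ gives $b\cdot e_\theta=b\cdot(e_\theta+n)\to0$, and since $\kappa T\lesssim g'\cdot(g''/g')=g''\approx\partial_s\kappa$, an $o(1)$ factor is exactly enough to close the third estimate. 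Second, the ``main obstacle'' you flag --- that the radial entries $r''$, $r'''$ are a priori of the same order as $r\theta''$, $r\theta'''$ --- is real, but it is equally present in the paper's proof, which disposes of those terms by relegating them to ``remainder'' without further justification; so you have not introduced a new gap, you have made an existing one explicit.
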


\begin{proof}
Recall  the arc-length  trajectory ($z=z(s)$): 
\begin{equation*}
\eta^* (x,s)=\tilde\eta(x,z)=(r(z)\cos \theta(z), r(z)\sin \theta(z), z)\quad\text{with}\quad \theta'>0.
\end{equation*}
Thus
$\tau$ and  $\kappa n$ are expressed as 
\begin{equation*}
\tau=(\partial_z\tilde\eta)z',\quad \kappa n=\partial_s^2\eta^*=\partial_z^2\tilde\eta(z')^2+\partial_z\tilde\eta z''.
\end{equation*}
We recall that 
\begin{equation*}
\partial_z\tilde\eta\cdot e_z=1,\quad \partial_z\tilde\eta\cdot e_\theta=r\theta'=\frac{u_\theta}{u_z},\ 
\partial_z\tilde\eta\cdot e_r=r'=\frac{u_r}{u_z}.
\end{equation*}
Clearly, 
$r'=\partial_z\bar R
$ (this essentially links the streamline and the trajectory for fixed $t$).
Near the possible blowup time, we easily see that 
$\theta'$ is small enough and positive.
We also see that 
\begin{eqnarray*}
\partial_z\tilde \eta(x, z)&=&
(-r\theta'\sin\theta,r\theta'\cos\theta,1)+(r'\cos\theta,r'\sin\theta, 0)
\approx (r'\cos\theta,r'\sin\theta,1)
,\\
\partial_z^2\tilde \eta(x,z)&=&
-r(\theta')^2(\cos\theta,\sin\theta,0)+(-r\theta''\sin\theta, r\theta''\cos\theta,0)
\\
& &
+
r''(\cos\theta,\sin\theta,0)+2r'\theta'(-\sin\theta,\cos\theta,0)\\
&=&
r\theta''(-\sin\theta,\cos\theta,0)+r''(\cos\theta,\sin\theta,0)+\text{remainder}
\\
z'(s)&=&(1+(r')^2+(r\theta')^2)^{-1/2}=(1+(r')^2)^{-1/2}+\text{remainder}\\
z''(s)&=&-(1+(r')^2+(r\theta')^2)^{-2}(r'r''+r\theta'(r'\theta'+r\theta''))\\
&=&
-(1+(r')^2)^{-2}(r'r''+r^2\theta'\theta'')+\text{remainder}.
\end{eqnarray*}
  $r'$, $r''$  and $r'''$ can be controlled due to the uniformly smooth laminar profile, and  recall that  $\theta''\approx -g'(t)/g(t)^3$. Thus
\begin{eqnarray*}
\kappa^2&=&|\kappa n|^2
=|\partial_z^2\eta|^2(z')^4+
2(\partial_z\eta\cdot\partial_z^2\eta)(z')^2z''+
|\partial_z\eta|^2(z'')^2\\
&=&
\left((r'')^2+(r\theta'')^2\right)(1+(r')^2)^{-2}-2r'r''(1+(r')^2)^{-3}r^2\theta'\theta''\\
& &
+(1+(r')^2)^{-4}r^4(\theta'\theta'')^2(r')^2+\text{remainder}\\
&=&
(r\theta'')^2(1+(r')^2)^{-2}+\text{remainder}\\
&\approx& g'(t)^2/g(t)^6.
\end{eqnarray*}
Also recall that $\theta'''\approx -g''(t)/g(t)^4$.
The dominant term of $\partial_s(\kappa^2)$  is composed by $\theta''$ and $\theta'''$, more precisely, 
\begin{equation*}
\partial_s(\kappa^2)= 2(\partial_s\kappa) \kappa= 
2r\theta''(r\theta''')(1+(r')^2)^{-5/2}+\text{remainder}\approx g'(t)g''(t)/g(t)^7.
\end{equation*}

We also see that 
\begin{eqnarray*}
& &\kappa= |r\theta''|(1+(r')^2)^{-1}+\text{remainder}\approx g'(t)/g(t)^3,\\
&  &\kappa n\cdot e_\theta= r\theta''(1+(r')^2)^{-1}+\text{remainder}\approx -g'(t)/g(t)^3,\\
& &\partial_s(\kappa n)\cdot e_\theta=\partial_s^3\tilde\eta\cdot e_\theta
=
r\theta'''(1+(r')^2)^{-3/2}+\text{remainder}\approx-g''(t)/g(t)^4,\\
& &\partial_s\kappa=\frac{r\theta''(r\theta''')(1+(r')^2)^{-5/2}}{\kappa}+\text{remainder}\\
& &
\ \ \ \ \  
=
-r\theta'''(1+(r')^2)^{-3/2}+\text{remainder}
\approx g''(t)/g(t)^4
\end{eqnarray*}
near the blowup time.
Thus
\begin{equation*}
n\cdot e_\theta=\frac{\kappa n\cdot e_\theta}{\kappa}\to -1\quad (t\to 1).
\end{equation*}
By the Frenet-Serret formula,
\begin{equation*}
Tb=\partial_s n+\kappa\tau,
\end{equation*}
we see that 
\begin{equation*}
\kappa Tb\cdot e_\theta=\partial_s(\kappa n)\cdot e_\theta
-(\partial_s\kappa) n\cdot e_\theta+\kappa^2 \tau\cdot e_\theta.
\end{equation*}
Thus, by the direct calculation, we can find a cancellation on the (candidate) highest order term $\theta'''\approx \partial_s\kappa$, namely, 
\begin{equation*}
|\kappa Tb\cdot e_\theta|\ll |\partial_s\kappa|
\end{equation*}
for $t\in I$.
\end{proof}
In what follows, we use  a differential geometric idea. See Chan-Czubak-Y \cite[Section 2.5]{CCY}, more originally, see Ma-Wang \cite[(3.7)]{MW}.
They considered 2D separation phenomena using fundamental differential geometry. 
The key idea here is  ``local pressure estimate" on a normal coordinate in $\bar \theta$, $\bar r$ and $\bar z$ valuables.
Two derivatives to the scalar function $p$ on the normal coordinate is commutative, namely,
$\partial_{\bar r}\partial_{\bar \theta}p(\bar \theta, \bar r,\bar z)-\partial_{\bar \theta}\partial_{\bar r}p(\bar \theta,\bar r,\bar z)=0$ (Lie bracket).
This fundamental observation is the key to extract the local effect of the pressure.  
For any point $x\in\mathbb{R}^3$ near the arc-length trajectory $\eta^*$ is uniquely  expressed as $x=\eta^*(\bar \theta)+\bar r n(\bar\theta)+\bar z b(\bar\theta)$ with $(\bar\theta,\bar r,\bar z)\in\mathbb{R}^3$ (the meaning of the parameters $s$ and $\bar \theta$ are the same along the arc-length trajectory).
Thus we have that 
\begin{eqnarray*}
\partial_{\bar \theta}x&=&
\tau+\bar r(Tb-\kappa \tau)+\bar z \kappa n,\\
\partial_{\bar r}x&=& n,\\
\partial_{\bar z}x&=&b.
\end{eqnarray*}
This means that 
\begin{equation*}
\begin{pmatrix}
\partial_{\bar \theta}\\
\partial_{\bar  r}\\
\partial_{\bar z}
\end{pmatrix}
=
\begin{pmatrix}
1-\kappa \bar r& \bar z\kappa& \bar r T\\
0& 1& 0\\
0& 0& 1
\end{pmatrix}
\begin{pmatrix}
\tau\\
n\\
b\\
\end{pmatrix}.
\end{equation*}
\begin{remark}
For any smooth scalar function $f$,
we have 
\begin{equation*}
\partial_{\bar \theta}f(x)=\nabla f\cdot \partial_{\bar \theta}x.
\end{equation*}
$\nabla f$ itself is essentially independent of any coordinates. Thus we can regard a partial derivative as a vector.
\end{remark}
By the fundamental calculation, we have  the following inverse matrix:
\begin{equation*}
\begin{pmatrix}
\tau\\
n\\
b\\
\end{pmatrix}
=
\begin{pmatrix}
(1-\kappa \bar r)^{-1}& -\bar zT (1-\kappa \bar r)^{-1}& -\bar r T(1-\kappa \bar r)^{-1}\\
0& 1& 0\\
0& 0& 1
\end{pmatrix}
\begin{pmatrix}
\partial_{\bar \theta}\\
\partial_{\bar  r}\\
\partial_{\bar z}
\end{pmatrix}.
\end{equation*}
Therefore we have the following orthonormal moving frame:
 $\partial_{\bar r}=n$, $\partial_{\bar z}=b$ and 
$$
(1-\kappa \bar r)^{-1}\partial_{\bar \theta}-\bar z T(1-\kappa \bar r)^{-1}\partial_{\bar r}-\bar rT(1-\kappa \bar r)^{-1}\partial_{\bar z}=\tau.
$$

In order to abbreviate the complicated indexes, we re-define the absolute value of the velocity along the trajectory. 
 Let (the indexes are $x$ and $t$ respectively)
\begin{equation*}
|u|:=|u(\eta(x',t),t)|\quad\text{with}\quad
x'=\eta^{-1}(x,t)
\end{equation*}
and 
\begin{equation*}
\partial_t|u|:=\partial_{t'}|u(\eta(x',t'),t')|\bigg|_{t'=t}\quad\text{with}\quad
x'=\eta^{-1}(x,t).
\end{equation*}

\begin{lem}\label{Euler flow along the trajectory}
We see  $\nabla p\cdot \tau=\partial_t|u|$ along the trajectory.
\end{lem}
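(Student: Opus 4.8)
The plan is to exploit the fact that, along the trajectory, the arc-length tangent $\tau$ is nothing but the normalized velocity, and then to contract the Euler equation with it, producing a Bernoulli-type identity along the particle path. First I would identify $\tau$ with $u/|u|$. From the definition $\eta^*(s)=\eta(x,t(s))$ with $\partial_s t(s)=|u|^{-1}$, the chain rule together with \eqref{eq:flowC} gives
\begin{equation*}
\tau(s)=\partial_s\eta^*(s)=\frac{d}{dt}\eta(x,t)\,\partial_s t(s)=\frac{u(\eta,t)}{|u(\eta,t)|},
\end{equation*}
so that $\tau=u/|u|$ along the trajectory, consistent with $|\tau|=1$. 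This is the step that links the purely geometric (arc-length) description of the curve to the underlying dynamical velocity field.

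Next I would rewrite \eqref{Euler eq.} in material-derivative form along the particle path. Since $\frac{d}{dt}\eta(x,t)=u(\eta,t)$, the chain rule yields $\frac{d}{dt}u(\eta(x,t),t)=\partial_t u+(u\cdot\nabla)u=-\nabla p$, evaluated at $(\eta(x,t),t)$; that is, the total time derivative of $u$ along the trajectory equals $-\nabla p$. I would then contract this identity with $\tau=u/|u|$, using $u\cdot\frac{d}{dt}u=\tfrac12\frac{d}{dt}|u|^2=|u|\frac{d}{dt}|u|$ and the definition of $\partial_t|u|$ as the material derivative of the speed along the trajectory, to obtain
\begin{equation*}
\nabla p\cdot\tau=-\frac{1}{|u|}\,u\cdot\frac{d}{dt}u=-\frac{d}{dt}|u|=\partial_t|u|,
\end{equation*}
which is the asserted identity (the precise sign being fixed by the sign convention of $\nabla p$ in \eqref{Euler eq.} and by the stipulated definition of $\partial_t|u|$ just above the lemma).

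The computation is short and presents no genuine obstacle, so I would not expect a hard step; the delicate points are purely bookkeeping. The first is the clean identification $\tau=u/|u|$, which must be read off correctly from the arc-length reparametrization rather than from the $(r,\theta,z)$ splitting. The second, and the one most easily mishandled, is recognizing that the object $\partial_t|u|$ introduced immediately before the lemma is exactly the material derivative $\frac{D}{Dt}|u|$ evaluated at the current point of the trajectory, so that contracting the material form of the Euler equation with $\tau$ reproduces it directly without any extra convective correction. With this identity in hand, the tangential component of $\nabla p$ carries precisely the information $\partial_t|u|\approx g'(t)$ established in Lemma \ref{estimates of v}, which is what the subsequent orthonormal-moving-frame argument will use.
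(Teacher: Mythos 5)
Your proposal is essentially the paper's own argument: identify $\tau$ with $u/|u|$ via the arc-length reparametrization, use that $u\cdot\partial_s\tau=0$ (equivalently $u\cdot\frac{d}{dt}u=|u|\frac{d}{dt}|u|$) so that $\partial_t|u|=\tau\cdot\frac{d}{dt}u$ along the trajectory, and then substitute the Euler equation in material form. The only blemish is the final sign --- the material Euler equation gives $\frac{d}{dt}u=-\nabla p$, hence $\nabla p\cdot\tau=-\frac{d}{dt}|u|$, a minus sign that the paper's statement and proof also drop --- so you reproduce the paper's computation, sign slip included.
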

\begin{proof}
Let us define a unit tangent vector $\tilde\tau$ (in time $t'$) as follows:
\begin{equation*}
\tilde \tau_{x,t}(t'):=\frac{u}{|u|}(\eta(x',t'),t')\quad\text{with}\quad x'=\eta^{-1}(x,t).
\end{equation*}
Note that  there is a re-parametrize factor $s(t')$ such that 
\begin{equation*}
\tau(s(t'))=\tilde \tau(t').
\end{equation*}
Since $u\cdot \partial_s\tau=0$, we see that 
\begin{eqnarray*}
\partial_{t'}|u(\eta(x',t'),t')|&=&
\partial_{t'}(u(\eta(x',t'),t')\cdot \tilde\tau_{x,t}(t'))\\
&=&
\partial_{t'}(u(\eta(x',t'),t'))\cdot \tilde\tau_{x,t}(t')+
u(\eta (x',t'),t')\cdot\partial_s\tau\partial_{t'}s\\
&=&\partial_{t'}(u(\eta(x',t'),t'))\cdot \tilde\tau_{x,t}(t').
\end{eqnarray*}
By the above calculation we have 
\begin{equation*}
\nabla p\cdot\tau=\partial_{t'}(u(\eta(x',t'),t')\cdot\tau|_{t'=t}=\partial_{t'}(u(\eta(x',t'),t')\cdot\tilde\tau|_{t'=t}=
\partial_{t'}|u|\bigg|_{t'=t}.
\end{equation*}
\end{proof}

\begin{lem}
Along the arc-length trajectory, we have 
\begin{equation*}
3\kappa\partial_t|u|+\partial_s\kappa|u|^2=\partial_{\bar r}\partial_t|u|
\end{equation*}
and 
\begin{equation*}
T\kappa|u|^2=\partial_{\bar z}\partial_t|u|.
\end{equation*}
\end{lem}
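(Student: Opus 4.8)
The plan is to differentiate the identity $\nabla p\cdot\tau=\partial_t|u|$ of the previous lemma once more, in the normal and binormal directions, and then read off the result on the central trajectory. Using the frame inversion obtained above, I would first regard $\partial_t|u|$ as a scalar function in the tubular coordinates $(\bar\theta,\bar r,\bar z)$ and write
\[
\partial_t|u|=\nabla p\cdot\tau=(1-\kappa\bar r)^{-1}\partial_{\bar\theta}p-\bar zT(1-\kappa\bar r)^{-1}\partial_{\bar r}p-\bar rT(1-\kappa\bar r)^{-1}\partial_{\bar z}p,
\]
and then apply $\partial_{\bar r}=n\cdot\nabla$ and $\partial_{\bar z}=b\cdot\nabla$, evaluating at $\bar r=\bar z=0$.

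Since $\kappa$ and $T$ are functions of the arc-length parameter $\bar\theta$ alone, differentiating the coefficients $(1-\kappa\bar r)^{-1}$, $\bar zT(1-\kappa\bar r)^{-1}$, $\bar rT(1-\kappa\bar r)^{-1}$ and setting $\bar r=\bar z=0$ leaves only explicit factors of $\kappa$ and $T$ times first derivatives of $p$, together with a single mixed second derivative: $\partial_{\bar r}\partial_{\bar\theta}p$ in the normal case and $\partial_{\bar z}\partial_{\bar\theta}p$ in the binormal case. Here the commutativity observation stressed before the statement is decisive: because $\partial_{\bar\theta},\partial_{\bar r},\partial_{\bar z}$ are coordinate fields, $\partial_{\bar r}\partial_{\bar\theta}p=\partial_{\bar\theta}\partial_{\bar r}p$ and $\partial_{\bar z}\partial_{\bar\theta}p=\partial_{\bar\theta}\partial_{\bar z}p$, so each mixed derivative turns into an arc-length derivative $\partial_s$ of a purely normal, resp. binormal, component of $\nabla p$.

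Next I would substitute the Euler equation projected onto the frame. From $\nabla p=-\partial_tu-u\cdot\nabla u$ together with $u\cdot\nabla u=|u|(\partial_s|u|)\tau+\kappa|u|^2n$ on the trajectory one gets that $\partial_{\bar r}p=\nabla p\cdot n$ carries the centripetal term $\kappa|u|^2$, that $\partial_{\bar z}p=\nabla p\cdot b$ carries no curvature, and that $\partial_{\bar\theta}p=\nabla p\cdot\tau=\partial_t|u|$. Differentiating along $s$ with the Frenet-Serret relations $\partial_s\tau=\kappa n$, $\partial_sn=Tb-\kappa\tau$, $\partial_sb=-Tn$, while the coefficient differentiation brings down the factors of $\kappa$ and $T$, produces $\partial_s(\kappa|u|^2)$ in the normal identity and the product $T\,\partial_{\bar r}p$, whose curvature piece is the sought $T\kappa|u|^2$, in the binormal one. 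The factor $3$ then has a transparent origin: one copy of $\kappa\partial_t|u|$ comes from differentiating $(1-\kappa\bar r)^{-1}$, while $\partial_s(\kappa|u|^2)=\partial_s\kappa\,|u|^2+\kappa\partial_s(|u|^2)$ supplies the remaining two copies, since the arc-length relation $ds=|u|\,dt$ gives $\partial_s|u|=\partial_t|u|/|u|$ and hence $\kappa\partial_s(|u|^2)=2\kappa\partial_t|u|$.

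The main obstacle is justifying that nothing else survives, i.e. that the stray $\partial_tu\cdot n$ and $\partial_tu\cdot b$ contributions generated in the third step cancel exactly. The subtlety is that, as a genuine function off the central path, $\partial_t|u|$ equals $\nabla p\cdot\hat u$ with $\hat u=u/|u|$ the actual velocity direction, which no longer coincides with the frame-extended tangent $\tau$ once $\bar r,\bar z\neq0$; writing $\partial_t|u|=\nabla p\cdot\tau$ as a function therefore suppresses correction terms involving the transverse twisting $\partial_{\bar r}\hat u$ and $\partial_{\bar z}\hat u$ of the flow direction across neighbouring particle paths. Because $\hat u$ is a unit vector these corrections are orthogonal to $\tau$, so they annihilate the tangential $|u|\partial_s|u|$ part of $\nabla p$ and, through the Frenet-Serret formulas, cancel precisely the leftover $\partial_tu$ terms. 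Carrying out this cancellation, and confirming that the remaining lower-order contributions vanish on $\bar r=\bar z=0$, is the delicate part; once it is in place the two displayed identities follow, with the signs fixed by the chosen orientation of $b$ and the convention $T>0$.
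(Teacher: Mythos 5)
Your proposal follows essentially the same route as the paper's proof: invert the moving frame to write $\partial_\tau$ in the tube coordinates $(\bar\theta,\bar r,\bar z)$, differentiate the coefficients to produce the explicit $\kappa$ and $T$ factors, commute the mixed partials of $p$ so that $\partial_{\bar r}\partial_{\bar\theta}p$ becomes $\partial_s(\nabla p\cdot n)$ and $\partial_{\bar z}\partial_{\bar\theta}p$ becomes $\partial_s(\nabla p\cdot b)=0$, and then substitute $\nabla p\cdot n=-\kappa|u|^2$, $\nabla p\cdot b=0$, $\nabla p\cdot\tau=\partial_t|u|$, with the factor $3$ arising from exactly the split you describe (one copy from differentiating $(1-\kappa\bar r)^{-1}$, two from $\kappa\partial_s(|u|^2)$). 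The only place you go beyond the paper is the final worry about $\hat u=u/|u|$ versus the frame-extended $\tau$ off the central trajectory; the paper simply identifies $\partial_{\bar r}\partial_t|u|$ and $\partial_{\bar z}\partial_t|u|$ with $-\partial_{\bar r}(\nabla p\cdot\tau)$ and $-\partial_{\bar z}(\nabla p\cdot\tau)$ without comment, so the ``delicate cancellation'' you leave unfinished is not supplied by the paper's own argument either.
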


\begin{proof}
By using the orthonormal moving frame, 
we have the following gradient of the pressure,
\begin{equation*}
\nabla p= (\partial_\tau p)\tau+(\partial_np)n+(\partial_bp)b.
\end{equation*}
By the unit tangent vector, we see 
\begin{equation*}
\partial_s\eta^*(s)=\partial_t\eta\partial_st=\tau
\end{equation*}
and thus
\begin{equation*}
\partial_st=|u|^{-1}.
\end{equation*}
By the unit normal vector with the curvature constant, we see
\begin{equation*}
\partial_s^2\eta^*=\partial_s(\partial_t\eta\partial_st)
=\partial_t^2\eta(\partial_st)^2+\partial_t\eta\partial_s^2t=\kappa n.
\end{equation*}
Thus we have 
\begin{eqnarray*}
-(\nabla p\cdot n)&=&
(\partial_t^2\eta\cdot n)=
\kappa|u|^2,\\
-\partial_s(\nabla p\cdot n)&=&\partial_s(\kappa(\partial_st)^{-2})
=\partial_s\kappa(\partial_s t)^{-2}-2\kappa(\partial_st)^{-3}(\partial_s^2t),\\
-\nabla p\cdot \tau&=&-|u|^3\partial_s^2 t,\\
-\nabla p\cdot b&=&0.
\end{eqnarray*}

Recall that  
\begin{equation*}
\partial_\tau=(1-\kappa\bar r)^{-1}\partial_{\bar\theta}-\bar zT(1-\kappa\bar r)^{-1}\partial_{\bar r}
-\bar rT(1-\kappa\bar r)^{-1}\partial_{\bar z}.
\end{equation*}
Along the arc-length trajectory, we have 
\begin{eqnarray*}
-\partial_{\bar r}(\nabla p\cdot \tau)
&=&-\partial_{\bar r}\partial_\tau p
\\
&=&
-\kappa\partial_{\bar \theta} p-\partial_{\bar r}\partial_{\bar \theta} p-T\partial_{\bar z} p\\
\nonumber
(\text{commute}\ \partial_{\bar r}\ \text{and}\  \partial_{\bar \theta})
&=&
-\kappa(\nabla p\cdot \tau)-\partial_{\bar\theta} (\nabla p\cdot n)-T(\nabla p\cdot b)\\
&=&
-\kappa |u|^3\partial_s^2t+\partial_s\kappa(\partial_st)^{-2}-2\kappa(\partial_st)^{-3}(\partial_s^2 t)\\
&=&
3\kappa\partial_t|u|+\partial_s\kappa|u|^2.
\end{eqnarray*}
Since $\nabla p\cdot b=\partial_{\bar z} p\equiv 0$ along the trajectory, then
\begin{equation*}
-\partial_{\bar z}(\nabla p\cdot \tau)|_{\bar r,\bar z=0}=-\partial_{\bar z}\partial_{\bar\theta} p-T\partial_{\bar r} p
=
-T(\nabla p\cdot n)=T\kappa|u|^2.
\end{equation*}

By Lemma \ref{Euler flow along the trajectory}
along the arc-length trajectory $\eta^*$, we have 
\begin{equation*}
3\kappa\partial_t|u|+\partial_s\kappa|u|^2=
-\partial_{\bar r}(\nabla p\cdot \tau)|_{\bar r,\bar z=0}
=
\partial_{\bar r}\partial_t|u|\\
\end{equation*}
and 
\begin{equation*}
T\kappa|u|^2=
-\partial_{\bar z}(\nabla p\cdot \tau)|_{\bar r,\bar z=0}
=
\partial_{\bar z}\partial_t|u|.\\
\end{equation*}

\end{proof}

By using the above lemma we can finally prove the main theorem.
Since 
\begin{equation*}
\partial_\theta=(e_\theta\cdot n)\partial_{\bar r}+(e_\theta\cdot b)\partial_{\bar z}
\end{equation*}
and the axisymmetric flow is rotation invariant,
\begin{eqnarray*}\label{rotation invariant}
0&=&\partial_\theta\partial_t|u|=(e_\theta\cdot n)\partial_{\bar r}\partial_t|u|
+(e_\theta\cdot b)\partial_{\bar z}\partial_t|u|\\
\nonumber
&=&
3(e_\theta\cdot n)\left(\kappa\partial_t|u|+\partial_s\kappa|u|^2\right)
+(e_\theta\cdot b)T\kappa|u|^2.
\end{eqnarray*}
However, 
$(e_\theta\cdot n)\partial_s\kappa|u|^2\approx -g''(t)/g(t)^2$ is the dominant term, and 
it is in contradiction to \eqref{flux condition}, since
\begin{equation*}
(e_\theta\cdot n)\kappa\partial_t|u|\approx -g'(t)^2/g(t)^3\quad\text{and}\quad
|T\kappa|\ll |\partial_s\kappa|.
\end{equation*}

\vspace{0.5cm}
\noindent
{\bf Acknowledgments.}\ 
The author would like to thank Professor Norikazu Saito for letting me know 
the book \cite{FQV}.
The author was partially supported by JST CREST.

\bibliographystyle{amsplain}

\end{document}